\newcommand{\IB}{\mathbb{B}}
\newcommand{\IC}{\mathbb{C}}
\newcommand{\IF}{\mathbb{F}}
\newcommand{\IM}{\mathbb{M}}
\newcommand{\IN}{\mathbb{N}}
\newcommand{\IZ}{\mathbb{Z}}
\newcommand{\cC}{\mathcal{C}}
\newcommand{\cH}{\mathcal{H}}
\newcommand{\cI}{\mathcal{I}}
\newcommand{\cO}{\mathcal{O}}
\newcommand{\cR}{\mathcal{R}}
\newcommand{\cU}{\mathcal{U}}
\newcommand{\cZ}{\mathcal{Z}}
\newcommand{\fS}{\mathfrak{S}}
\newcommand{\rC}{\mathrm{C}}
\newcommand{\rRC}{\mathrm{RC}}
\newcommand{\ve}{\varepsilon}
\newcommand{\vp}{\varphi}
\DeclareMathOperator{\dist}{dist}
\DeclareMathOperator{\GL}{GL}
\DeclareMathOperator{\sm}{s}
\newcommand{\CI}{{\mathcal C \mathcal I}}
\newcommand{\ip}[1]{\mathopen{\langle}#1\mathclose{\rangle}}
\newtheorem{thm}[subsection]{Theorem}
\newtheorem{prop}[subsection]{Proposition}
\newtheorem{cor}[subsection]{Corollary}
\newtheorem{lem}[subsection]{Lemma}
\theoremstyle{definition}
\newtheorem{defn}[subsection]{Definition}
\theoremstyle{remark}
\newtheorem{rem}[subsection]{Remark}
\title[Amenability for unitary groups]{Amenability for unitary groups of 
simple monotracial $\mbox{C}^*$-alge\-bras}
\author{Narutaka Ozawa}
\address{RIMS, Kyoto University, \mbox{606-8502} Japan}
\email{narutaka@kurims.kyoto-u.ac.jp}
\dedicatory{Dedicated to the memory of Eberhard Kirchberg}
\thanks{The author was partially supported by JSPS KAKENHI Grant Numbers 20H01806 and 20H00114}
\subjclass{46L99, 43A07}
\keywords{Amenability, skew-amenability, unitary groups}
\date{\today}
\begin{document}
\begin{abstract}
We prove the following two results. 
First, the isometry semigroup of a unital properly infinite 
nuclear $\mbox{C}^*$-alge\-bra is right amenable. 
Second, the unitary group of a unital simple monotracial 
$\mbox{C}^*$-alge\-bra whose tracial GNS representation 
is hyperfinite is skew-amenable in the weak topology. 
This answers in part a conjecture of Alekseev, Schmidt, 
and Thom and a question of Pestov. 
\end{abstract}
\maketitle
\section{Introduction}
We recall the cornerstone of the $\mathrm{C}^*$-alge\-bra 
theory that a $\mathrm{C}^*$-alge\-bra $A$ 
is nuclear (or amenable) if and only if the enveloping von Neumann 
algebra $A^{**}$ is hyperfinite (\cite{connes,ce,elliott}), 
which is equivalent to amenability of $\cU(A^{**})$ 
in the ultraweak topology (\cite{delaharpe}). 
In turn, amenability property of the unitary group $\cU(A)$ 
of a unital $\mathrm{C}^*$-alge\-bra $A$ has been drawing 
considerable attention of researchers. 
Recall that a topological group $G$ is said to be \emph{amenable} 
(resp.\ \emph{skew-amenable}) if 
there is a left-invariant (resp.\ right-invariant) mean 
on the space of right uniformly continuous bounded functions on $G$. 
It is known that $A$ is nuclear if and only if $\cU(A)$ is amenable 
in the weak topology (\cite{paterson}), essentially because $\cU(A)$ 
is dense in $\cU(A^{**})$ in the ultraweak topology. 
On the other hand, it is not clear when $\cU(A)$ 
with the norm topology is amenable. 
Note that norm amenability of $\cU(A)$ implies, 
in addition to nuclearity (\cite{connes:jfa}), 
that $A$ has the QTS property (i.e., every nonzero quotient of $A$ 
admits a tracial state). 
The converse is believed to hold, probably under some regularity assumptions. 
See \cite{ast} around this problem and the progress toward it. 
The purpose of this note is to prove the following two results. 

\begin{thm}\label{thm:1}
Let $A$ be a  a unital properly infinite $\mathrm{C}^*$-alge\-bra. 
Then the isometry semigroup $\cI(A)$ of $A$ is right amenable in the norm topology 
if and only if $A$ is nuclear.  
\end{thm}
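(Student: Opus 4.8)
The plan is to prove each implication by relating right amenability of $\cI(A)$ to injectivity of the enveloping von Neumann algebra, using the classical chain "$A$ nuclear $\iff$ $A^{**}$ injective $\iff$ there is a conditional expectation $B(H)\to A'$'', where $A\subseteq B(H)$ is the universal representation (so $A''=A^{**}$). For the "only if'' direction, let $m$ be a right-invariant mean on the right uniformly continuous bounded functions on $\cI(A)$ in the norm topology, so $m(R_vf)=m(f)$ with $(R_vf)(t)=f(tv)$. For $T\in B(H)$ and $\xi,\eta\in H$ put $f_{T,\xi,\eta}(v)=\ip{Tv\xi,v\eta}$. Since every isometry acts isometrically on vectors, $f_{T,\xi,\eta}$ is bounded and norm-Lipschitz, and a one-line estimate shows $v\mapsto R_vf_{T,\xi,\eta}$ is norm-continuous; thus $f_{T,\xi,\eta}$ is in the domain of $m$, and $R_wf_{T,\xi,\eta}=f_{T,w\xi,w\eta}$. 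Define $\mathcal E\colon B(H)\to B(H)$ by $\ip{\mathcal E(T)\xi,\eta}=m(f_{T,\xi,\eta})$. Then $\mathcal E$ is unital and completely positive (for $[T_{ij}]\ge0$ the integrand $v\mapsto\sum_{i,j}\ip{T_{ij}v\xi_j,v\xi_i}$ is pointwise nonnegative). Right-invariance of $m$ gives $w^*\mathcal E(T)w=\mathcal E(T)$ for every $w\in\cI(A)$, in particular for every unitary of $A$, so $\mathcal E(T)\in\cU(A)'=A'$; and for $T\in A'$ the function $f_{T,\xi,\eta}$ is the constant $\ip{T\xi,\eta}$, so $\mathcal E(T)=T$. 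Hence $\mathcal E$ is a conditional expectation of $B(H)$ onto $A'$, whence $A'$, and therefore $A^{**}$, is injective and $A$ is nuclear. (This direction uses only that $A$ is unital; the same computation for a left-invariant mean would force $\mathcal E(s_1s_1^*)=\mathcal E(s_2s_2^*)=1$ for orthogonal isometries $s_1,s_2$, contradicting $s_1s_1^*+s_2s_2^*\le1$, which explains why right amenability is the right notion here.)

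For the converse, let $A$ be nuclear and properly infinite. Proper infiniteness yields isometries $s_1,s_2,\dots\in A$ with mutually orthogonal ranges (a unital copy of $\cO_\infty$ in $A$); these identify $M_n(A)$ with the corner $q_nAq_n$, $q_n=\sum_{i\le n}s_is_i^*\le1$, and, via a $2\times2$ unitary dilation performed inside such a corner and then completed by $1-q_n$, they dilate every contraction of $A$ to a unitary of $A$ depending norm-continuously on it. The plan is to combine this with the completely positive approximation property characterising nuclearity — nets of unital completely positive maps $A\xrightarrow{\ \alpha_\lambda\ }M_{n_\lambda}\xrightarrow{\ \beta_\lambda\ }A$ with $\beta_\lambda\alpha_\lambda\to\id_A$ pointwise in norm — and with the amenability of $\cU(A)$ supplied by nuclearity (by Paterson's theorem, through hyperfiniteness of $A^{**}$), to construct a net of means $\mu_\lambda$ on the right uniformly continuous bounded functions on $\cI(A)$ with $\|\mu_\lambda\circ R_v-\mu_\lambda\|\to0$ for every $v\in\cI(A)$; a weak$^*$ cluster point of $(\mu_\lambda)$ is then the desired right-invariant mean. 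A crucial feature is that these means must be placed "at infinity'' among the proper isometries: a mean concentrated near $\cU(A)$ (such as a Haar measure of a finite unitary group transported into $\cU(A)$) is not even approximately right-invariant — testing against $v\mapsto\|1-vv^*\|$ and translating by $s_1$ exhibits a jump of size $\|1-s_1s_1^*\|$ — so the averaging over finite unitary groups has to be interleaved with an averaging along the shifts $s_i$, whose iterated ranges shrink to $0$.

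This interleaving is the step I expect to be the main obstacle. In the weak-topology picture for $\cU(A)$, amenability is essentially free because $\cU(A^{**})$ is a directed union of compact unitary groups of finite-dimensional subalgebras; the norm topology admits no such exhaustion of $\cI(A)$, and the algebra of right uniformly continuous bounded functions contains many functions (e.g.\ $v\mapsto\|v-1\|$ or $v\mapsto\|1-vv^*\|$) that are norm-uniformly continuous but not continuous for the weak topology, so the value of the mean on them cannot be recovered from matrix-coefficient data alone. Moreover completely positive maps do not preserve the relation $v^*v=1$, so the approximating measures are only asymptotically translation-invariant; forcing all of the resulting error terms to zero uniformly in $v$ — equivalently, pinning down the mean on the "purely norm'' functions — is exactly where proper infiniteness is indispensable, furnishing both the room for the dilations and the shifts along which the mean escapes to infinity, and thereby removing the trace-type obstruction that (as the introduction recalls for the unitary group) must be absent for such an invariant mean to exist. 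Without proper infiniteness the situation is genuinely different and the equivalence is not expected to hold in the same form.
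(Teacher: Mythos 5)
Your ``only if'' direction is correct and is essentially the paper's own argument: the same functions $v\mapsto\ip{v^*Tv\xi,\eta}$ on the universal representation, the same passage from a right-invariant mean to a conditional expectation $\IB(\cH)\to A'$, and the same conclusion via injectivity of $A'$ and hence of $A^{**}$. (Your side remark on why a \emph{left}-invariant mean cannot exist is also sound, and parallels the paper's observation that a left-invariant mean would produce a tracial state.)

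The ``if'' direction, however, is a genuine gap: you describe a plan and then explicitly stop at what you call ``the main obstacle,'' so no proof is given. More importantly, the plan as stated is missing the one ingredient that makes this direction work. Weak-topology amenability of $\cU(A)$ (Paterson) together with the completely positive approximation property does not produce norm-approximately invariant measures on $\cI(A)$: as you yourself note, the relevant algebra of functions contains purely metric data such as $v\mapsto\|1-vv^*\|$ that no amount of matrix-coefficient averaging controls, and unital completely positive maps do not respect the relation $v^*v=1$. The paper's proof instead rests on a F{\o}lner-type characterization of nuclearity in the \emph{norm} topology, due to Kishimoto--Sakai and Sato: $A$ is nuclear if and only if for every finite $E\subset\cI(A)$ and $\ve>0$ there exist a finite set $F$ of \emph{column isometries} $a=(a_1,\dots,a_n)$ (meaning $\sum_i a_i^*a_i=1$) and permutations $\rho_s$ of $F$ with $\|as-\rho_s(a)\|_{\rC}<\ve$ for all $s\in E$, $a\in F$. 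Proper infiniteness then enters exactly once, through the right $\cI(A)$-equivariant isometry $\CI(A)\ni(a_1,\dots,a_n)\mapsto\sum_i s_ia_i\in\cI(A)$ built from isometries $s_i$ with orthogonal ranges, which converts these column-isometry F{\o}lner sets into genuine subsets of $\cI(A)$; a weak$^*$ limit of the uniform measures on the resulting sets is the right-invariant mean. Your proposed ``interleaving of unitary averaging with shifts along the $s_i$'' is not a substitute for this: without the Kishimoto--Sakai/Sato theorem (or an equivalent norm-topology F{\o}lner statement) there is no mechanism for producing the approximately invariant finite sets in the first place.
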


Here, right amenability of $\cI(A)$ means existence of a right invariant 
mean on the space of uniformly continuous bounded functions on $\cI(A)$. 
We note that the assumption on proper infiniteness cannot be removed. 
Indeed, if $A$ is finite, then amenability of $\cI(A)=\cU(A)$ implies existence 
of a tracial state, which is not always the case (\cite{rordam:acta}). 
For the same reason, the conclusion of right amenability cannot be 
replaced with left amenability because it also implies existence of a tracial state 
(see Section~\ref{sec:2}). 

We turn our attention to the finite case. 
For a unital $\mbox{C}^*$-alge\-bra $A$, 
we denote by $T(A)$ the compact convex space of tracial states on $A$.
The $\mbox{C}^*$-alge\-bra $A$ is said to be \emph{monotracial} if $|T(A)|=1$. 
We write $\|a\|_\tau:=\tau(a^*a)^{1/2}$ for $a\in A$ and $\tau\in T(A)$ and 
define the \emph{uniform $2$-norm} on $A$ by
\[
\| a \|_{T(A)} := \sup\{ \|a\|_\tau : \tau\in T(A)\}.
\]

\begin{thm}\label{thm:2}
Let $A$ be a unital $\mathrm{C}^*$-alge\-bra with the QTS property and 
denote by $\cU(A)$ the unitary group of $A$. 
Consider the following conditions. 
\begin{enumerate}[{\rm(i)}]
\item\label{item:1} $\cU(A)$ is amenable in the uniform $2$-norm topology.
\item\label{item:2} $\cU(A)$ is skew-amenable in the weak topology. 
\item\label{item:3} For every $\tau\in T(A)$, the von Neumann 
algebra $\pi_\tau(A)''$ generated by the GNS representation 
$\pi_\tau$ for $\tau$ is hyperfinite.
\end{enumerate}
Then $(\mbox{\rm \ref{item:1}})$ $\Rightarrow$ $(\mbox{\rm \ref{item:2}})$ 
$\Rightarrow$ $(\mbox{\rm \ref{item:3}})$ holds.
If $A$ has only finitely many extremal tracial states, 
then $(\mbox{\rm \ref{item:3}})$ $\Rightarrow$ $(\mbox{\rm \ref{item:1}})$ holds. 
\end{thm}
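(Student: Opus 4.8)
The plan is to derive the two forward implications from soft facts about invariant means on topological groups together with de la Harpe's theorem, cited in the introduction, characterizing hyperfiniteness of a von Neumann algebra by amenability of its unitary group in the ultraweak topology; in $(\ref{item:3})\Rightarrow(\ref{item:1})$ the hypothesis of finitely many extremal traces will enter decisively. I will use throughout that, by traciality, $\|uv^{*}-1\|_{\tau}=\|u-v\|_{\tau}$ for all $u,v\in\cU(A)$ and $\tau\in T(A)$; in particular $d(u,v)=\|u-v\|_{T(A)}$ is a bi-invariant pseudometric on $\cU(A)$. For any topological group carrying a bi-invariant (pseudo)metric, amenability and skew-amenability are equivalent: transport a left-invariant mean on the uniformly continuous bounded functions through $f\mapsto f\circ(\cdot)^{-1}$, which interchanges the left and right translation actions. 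Thus $(\ref{item:1})$ already yields a right-invariant mean for the uniform $2$-norm topology. To deduce $(\ref{item:2})$ I would observe that the weak topology is coarser than the uniform $2$-norm topology, and indeed that even its right uniformity is coarser: a basic right entourage for the weak topology has the form $\{(u,v):\|u-v\|_{\tau}<\ve\text{ for }\tau\in F\}$ with $F\subseteq T(A)$ finite (using again $\|uv^{*}-1\|_{\tau}=\|u-v\|_{\tau}$), and it contains $\{(u,v):\|u-v\|_{T(A)}<\ve\}$; hence every right uniformly continuous bounded function for the weak topology is one for the uniform $2$-norm topology, and restricting the right-invariant mean yields $(\ref{item:2})$.

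For $(\ref{item:2})\Rightarrow(\ref{item:3})$, fix $\tau\in T(A)$ and write $M_{\tau}=\pi_{\tau}(A)''$, a finite von Neumann algebra with faithful normal trace $\bar\tau$. The homomorphism $\pi_{\tau}\colon\cU(A)\to\cU(M_{\tau})$ is continuous from the weak topology to the ultraweak topology of $\cU(M_{\tau})$, because $\|\pi_{\tau}(u)-\pi_{\tau}(v)\|_{\bar\tau}=\|u-v\|_{\tau}$, and its image $\cU(\pi_{\tau}(A))$ is ultraweakly dense by Kaplansky's density theorem and the exponential functional calculus. Since $\|\cdot\|_{\bar\tau}$ is bi-invariant, $\cU(M_{\tau})$ has small invariant neighbourhoods, so right translation acts norm-continuously on its right uniformly continuous bounded functions; a right-invariant mean therefore pushes forward along this continuous homomorphism with dense image, and $\cU(M_{\tau})$ is skew-amenable, hence---being SIN---amenable, in the ultraweak topology. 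By de la Harpe's theorem $M_{\tau}$ is hyperfinite.

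For $(\ref{item:3})\Rightarrow(\ref{item:1})$, let $\partial_{e}T(A)=\{\tau_{1},\dots,\tau_{n}\}$ and $\tau_{0}=\tfrac1n\sum_{i}\tau_{i}$. Since $T(A)$ is the convex hull of the $\tau_{i}$ and $\sigma\mapsto\sigma(a^{*}a)$ is affine, $\|a\|_{T(A)}=\max_{i}\|a\|_{\tau_{i}}$, whence $\tfrac1{\sqrt n}\|a\|_{T(A)}\le\|a\|_{\tau_{0}}\le\|a\|_{T(A)}$ for all $a\in A$; therefore the uniform $2$-norm topology on $\cU(A)$ is precisely the initial topology induced by $\pi_{\tau_{0}}\colon\cU(A)\to\cU(M)$, $M:=\pi_{\tau_{0}}(A)''$, where $\cU(M)$ carries the ultraweak topology. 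Distinct extremal traces have disjoint GNS representations, so $M=\bigoplus_{i=1}^{n}M_{\tau_{i}}$; each summand is hyperfinite by $(\ref{item:3})$, hence so is $M$, and by de la Harpe's theorem $\cU(M)$ is amenable in the ultraweak topology. It remains to transport this back: $\cU(\pi_{\tau_{0}}(A))$ is ultraweakly dense in $\cU(M)$ and so is amenable (amenability passes to dense subgroups); it is isomorphic as a topological group to $\cU(A)/\ker\pi_{\tau_{0}}$; and since $\ker\pi_{\tau_{0}}$ is the closure of $\{1\}$ in the uniform $2$-norm topology, $\cU(A)$ and this quotient have the same right uniformly continuous bounded functions. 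Hence $\cU(A)$ is amenable in the uniform $2$-norm topology, which is $(\ref{item:1})$.

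The main obstacle, and the place where finiteness of $\partial_{e}T(A)$ is genuinely used, is the step in $(\ref{item:3})\Rightarrow(\ref{item:1})$ identifying the uniform $2$-norm topology with the pull-back of the ultraweak topology of the single finite von Neumann algebra $\bigoplus_{i}M_{\tau_{i}}$: when there are infinitely many extremal traces the uniform $2$-norm is an $\ell^{\infty}$-type rather than an $L^{2}$-type combination of the seminorms $\|\cdot\|_{\tau_{i}}$, and no longer matches the ultraweak topology of a single finite von Neumann algebra, so a different argument would be needed there. Everything else---the dictionary between one-sided invariant means and dense subgroups and quotients of topological groups, Kaplansky density, and de la Harpe's theorem---is routine.
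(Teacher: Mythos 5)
Your handling of (iii) $\Rightarrow$ (i) is essentially the paper's argument (reduce to the single trace $\tau_0=\frac1n\sum_i\tau_i$ using the norm comparison $\|a\|_{T(A)}^2\le n\|a\|_{\tau_0}^2$, pass to the hyperfinite von Neumann algebra $\pi_{\tau_0}(A)''$, and transfer amenability back to the dense subgroup $\cU(A)$); the paper merely replaces your appeal to de la Harpe's theorem by an explicit F{\o}lner argument with a finite-dimensional subalgebra. Your sketch of (ii) $\Rightarrow$ (iii) is likewise the standard push-forward argument that the paper delegates to [AST, Prop.\ 4.4]. The genuine problem is (i) $\Rightarrow$ (ii). You assert that a basic right entourage for the weak topology has the form $\{(u,v):\|u-v\|_\tau<\ve \text{ for }\tau\in F\}$ with $F\subset T(A)$ finite. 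That is not what the weak topology is: it is the topology induced by $\sigma(A,A^*)$, so the basic right entourages are $\{(u,v):|\vp_j(uv^*-1)|<\ve,\ j=1,\dots,m\}$ for arbitrary $\vp_j\in A^*$, and for a non-tracial positive functional $\vp$ the quantity $|\vp(w-1)|$ is \emph{not} controlled by $\|w-1\|_{T(A)}$. For instance, in a UHF algebra take a decreasing sequence of minimal projections $q_n\in\IM_{2^n}$ and $w_n=1-2q_n$; then $\|w_n-1\|_{T(A)}=2^{1-n/2}\to0$, while any state $\vp$ with $\vp(q_n)=1$ for all $n$ (such states exist by weak-$*$ compactness) gives $\vp(w_n-1)=-2$. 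So the weak topology is not coarser than the uniform $2$-norm topology on $\cU(A)$, the claimed comparison of right uniformities fails, and the restriction of your right-invariant mean does not produce a mean on the weak-RUC functions.

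This is precisely the step where the paper has to do real work, and where the QTS hypothesis (which your argument never uses) enters. The paper interposes the \emph{skew-strong} topology, given by the seminorms $\|a\|_\vp=\vp(aa^*)^{1/2}$ for $\vp\in A^*_+$, which does dominate the weak topology by Cauchy--Schwarz, and then upgrades the F{\o}lner estimate from the uniform $2$-norm to every $\|\cdot\|_\vp$ by Dixmier averaging: all tracial states of the positive element $|F|^{-1}\sum_{u,v}(vu-\rho_u(v))(vu-\rho_u(v))^*$ are small, so by the QTS property some average $k^{-1}\sum_j w_j(\,\cdot\,)w_j^*$ over unitaries has small \emph{norm}; hence for each state $\vp$ there is a $j$ making the $\|\cdot\|_\vp$-estimate small after replacing $F$ by $w_jF$ (a left translation of the F{\o}lner set, which is harmless for right-invariant means). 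Without this averaging step, or some substitute for it, your proof of (i) $\Rightarrow$ (ii) does not go through.
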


This partly confirms/refutes a conjecture raised in \cite{ast}, 
where it is proved that $(\mbox{\rm \ref{item:2}})$ $\Rightarrow$ 
$(\mbox{\rm \ref{item:3}})$. 
We note that weak skew-amenability of $\cU(A)$ 
implies the QTS property of $A$ (\cite{ps,ast}).
The following corollary answers in the negative a question 
in \cite{pestov,js,ps} asking if skew-amenability 
implies amenability. 

\begin{cor}
Let $A$ be a unital simple monotracial $\mbox{C}^*$-alge\-bra and 
$\pi_\tau(A)''$ be the $\mathrm{II}_1$-factor  generated by the GNS 
representation $\pi_\tau$ for the unique tracial state $\tau$ on $A$. 
Then $\cU(A)$ is skew-amenable in the weak topology 
if and only if $\pi_\tau(A)''$ is hyperfinite. 

In particular, the unitary group $\cU(\cR)$ of the hyperfinite 
$\mathrm{II}_1$ factor $\cR$ (of any cardinality) 
is skew-amenable but not amenable in the weak topology. 
\end{cor}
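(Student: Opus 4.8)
The plan is to read the asserted equivalence off from Theorem~\ref{thm:2} and then apply it to $A=\cR$; the only input needed beyond Theorem~\ref{thm:2} is that $\cR$, of any cardinality, is a unital simple monotracial $\mathrm{C}^*$-alge\-bra which is nevertheless not nuclear.

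First I would observe that a unital simple monotracial $A$ has $A$ itself as its only nonzero quotient, and that $A$ carries its unique tracial state $\tau$; hence $A$ has the QTS property, and $T(A)=\{\tau\}$ shows $A$ has (one, hence) finitely many extremal tracial states. Theorem~\ref{thm:2} then gives $(\mbox{\rm \ref{item:1}})\Rightarrow(\mbox{\rm \ref{item:2}})\Rightarrow(\mbox{\rm \ref{item:3}})\Rightarrow(\mbox{\rm \ref{item:1}})$, so the three conditions are equivalent; and since $T(A)=\{\tau\}$, condition $(\mbox{\rm \ref{item:3}})$ is precisely the statement that $\pi_\tau(A)''$ is hyperfinite. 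The equivalence $(\mbox{\rm \ref{item:2}})\Leftrightarrow(\mbox{\rm \ref{item:3}})$ is the first assertion.

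For the ``in particular'' part I would check that $\cR$ is a unital simple monotracial $\mathrm{C}^*$-alge\-bra. Simplicity: for a nonzero closed two-sided ideal $I\subseteq\cR$ choose $0\neq a\in I$; multiplying $a^*a\in I$ by a suitable bounded Borel function of $a^*a$ yields a nonzero spectral projection $p$ of $a^*a$ lying in $I$, and by comparison theory in the factor $\cR$ one writes $1$ as a finite sum of projections each Murray--von Neumann equivalent to a subprojection of $p$, hence in $I$, so $I=\cR$. Monotraciality: any tracial state $\sigma$ on $\cR$ is constant on Murray--von Neumann classes of projections and monotone under subordination, so $\sigma(p)=f(\tau(p))$ for a bounded monotone additive $f\colon[0,1]\to[0,1]$ with $f(1)=1$, forcing $f=\id$; since the projections norm-densely span $\cR$, $\sigma=\tau$. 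These arguments are insensitive to the cardinality of $\cR$. As $\pi_\tau(\cR)''\cong\cR$ is hyperfinite, the first part gives that $\cU(\cR)$ is skew-amenable in the weak topology.

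Finally I would show $\cU(\cR)$ is not amenable in the weak topology, which by Paterson's theorem recalled in the introduction (\cite{paterson}) amounts to saying $\cR$ is not nuclear. Since a nuclear $\mathrm{C}^*$-alge\-bra is exact and exactness is inherited by $\mathrm{C}^*$-subalgebras, it suffices to embed a non-exact $\mathrm{C}^*$-algebra in $\cR$: take orthogonal projections $p_n\in\cR$ with $\sum_n p_n=1$, note that each corner $p_n\cR p_n$ is a $\mathrm{II}_1$ factor and so admits a unital embedding $\iota_n\colon M_{2^n}\hookrightarrow p_n\cR p_n$, and check that $(x_n)_n\mapsto\sum_n\iota_n(x_n)$, with the series convergent in the strong operator topology, is a unital isometric $*$-embedding of the $\ell^\infty$-product $\prod_n M_{2^n}$ into $\cR$; as $\prod_n M_{2^n}$ is not exact, $\cR$ is not exact, hence not nuclear, hence $\cU(\cR)$ is not weakly amenable. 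The part I would regard as the real content is this last, classical, non-exactness of $\prod_n M_{2^n}$ (together with the observation that it does sit inside $\cR$); the rest is an application of Theorem~\ref{thm:2} plus standard, cardinality-free $\mathrm{II}_1$-factor bookkeeping.
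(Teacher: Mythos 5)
Your proposal is correct and follows the same route the paper intends: the corollary is read off from Theorem~\ref{thm:2} (simplicity plus a trace gives the QTS property, monotraciality gives finitely many extremal traces, so (\ref{item:1})--(\ref{item:3}) are equivalent and (\ref{item:3}) is exactly hyperfiniteness of $\pi_\tau(A)''$), and the ``in particular'' part combines this with Paterson's theorem and non-nuclearity of $\cR$. The paper leaves the standard facts implicit, whereas you supply them explicitly --- simplicity and monotraciality of a II$_1$ factor, and non-exactness of $\cR$ via the embedded copy of $\prod_n \IM_{2^n}$ --- and all of these checks are correct and cardinality-free as claimed.
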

\subsection*{Acknowledgment.}
I am grateful to Yasuhiko Sato, Benoit Collins, Yuhei Suzuki, and Mikael R{\o}rdam  
for many illuminating correspondences. 
This research was partially supported by 
JSPS KAKENHI Grant Numbers 20H01806 and 20H00114. 

This note is dedicated to the memory of Eberhard Kirchberg. 
I owe him a large intellectual debt. 
I spent a good part of my graduate study banging my head 
against his habilitationsschrift (Universit\"at Heidelberg, 1991),
which was very frustrating but equally rewarding. 
\section{Proof of Theorem~\ref{thm:1}}\label{sec:2}
\begin{defn}\label{defn:coliso}
Let $A$ be a unital $\mathrm{C}^*$-alge\-bra. 
A finite sequence $a=(a_1,\ldots,a_n)$ in $A$ is called a 
\emph{column isometry} if $a^*a:=\sum_i a_i^*a_i =1$, i.e., 
if $a$ is an isometry in $\IM_{n,1}(A)$. 
We identify a finite sequence $(a_1,\ldots,a_n)$ with 
$(a_1,\ldots,a_n,0,\ldots,0)$. 
The set of isometries (resp.\ column isometries) of $A$ 
is denoted by $\cI(A)$ (resp.\ $\CI(A)$). 
For $a\in\CI(A)$ and $s\in\cI(A)$, write $as:=(a_1s,\ldots,a_ns)\in\CI(A)$. 

For a finite sequence $a=(a_1,\ldots,a_n)$ in $A$, we put
\[
\| a \|_{\rC} := \|\sum_i a_i^*a_i\|^{1/2}
\mbox{ and }\| a \|_{\rRC}:=\|\sum_i a_ia_i^*\|^{1/2}+\|\sum_i a_i^*a_i\|^{1/2}.
\]
We note that the distance $\| a-b\|_{\rC}$ makes sense for 
any finite sequences $a$ and $b$, by padding them out 
with $0$s as necessary. 
\end{defn}

The following theorem is proved in \cite{ks} 
and Section 5 in \cite{sato}, where it is proved 
for $E\subset\cU(A)$ but the proof works verbatim 
for $E\subset\cI(A)$. 

\begin{thm}
A unital $\mathrm{C}^*$-alge\-bra $A$ is nuclear if and only 
if for every finite subset 
$E\subset\cI(A)$ and every $\ve>0$ there are a non-empty 
finite subset $F\subset\CI(A)$ and 
permutations $\{\rho_s : s\in E\}$ on $F$ such that 
$\|a s - \rho_s(a) \|_{\rC} < \ve$ for every $s\in E$ and $a\in F$. 
\end{thm}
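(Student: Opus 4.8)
The statement is a F{\o}lner-type condition for the partial action of $\cI(A)$ on $\CI(A)$ by right multiplication, and the plan is to transfer it to the enveloping von Neumann algebra, using the equivalence of nuclearity of $A$ with injectivity of $A^{**}$. Fix a faithful representation $A\subseteq B(\cH)$ with $A''=A^{**}$, and to each $v=(v_1,\dots,v_n)\in\CI(A)$ attach the unital completely positive map
\[
\Phi_v\colon B(\cH)\to B(\cH),\qquad \Phi_v(T)=\sum_{i=1}^{n} v_i^*Tv_i.
\]
Three elementary observations drive everything: $(a)$ $\Phi_v$ restricts to the identity on $A'$, since the $v_i$ commute with $A'$ and $\sum_i v_i^*v_i=1$; $(b)$ $\mathrm{Ad}(s)\circ\Phi_v=\Phi_{vs}$ for $s\in\cI(A)$, where $\mathrm{Ad}(s)(T)=s^*Ts$; and $(c)$ $\|\Phi_v-\Phi_w\|\le 2\|v-w\|_{\rC}$ for column isometries $v,w$ padded to a common length. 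In particular, if the condition in the theorem holds for a finite $E\subseteq\cI(A)$ and $\ve>0$ with finite set $F$ and permutations $\rho_s$, then the barycentre $\Phi_F:=|F|^{-1}\sum_{v\in F}\Phi_v$ satisfies, upon reindexing the sum by $\rho_s$, the norm bound $\|\mathrm{Ad}(s)\circ\Phi_F-\Phi_F\|\le 2\max_{v\in F}\|vs-\rho_s(v)\|_{\rC}<2\ve$ for every $s\in E$.

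For ``condition $\Rightarrow$ nuclear'', direct the pairs $(E,\ve)$ in the usual way to get a net of unital completely positive maps $\Phi_F$, each fixing $A'$ pointwise by $(a)$, with $\|\mathrm{Ad}(s)\circ\Phi_F-\Phi_F\|\to0$ for every fixed $s\in\cI(A)$. A cluster point $\Phi_\infty$ in the point-weak$^*$ topology (Arveson's compactness of the set of unital completely positive maps) is a unital completely positive map that still fixes $A'$ pointwise and satisfies $\mathrm{Ad}(s)\circ\Phi_\infty=\Phi_\infty$ for all $s\in\cI(A)$; applied to the unitaries of $A$, which span $A$ as $A$ is unital, this forces $\Phi_\infty(T)$ to commute with $A$ for every $T$, so $\Phi_\infty$ maps $B(\cH)$ into $A'$ and is therefore a conditional expectation onto $A'$. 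Hence $A'$ is injective, so $A''=A^{**}$ is injective, so $A$ is nuclear.

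For the converse, I would run this in reverse. Nuclearity gives injectivity, hence semidiscreteness, of $A^{**}$, and a conditional expectation $\mathcal{E}\colon B(\cH)\to A'$, which by $(a)$ and $(b)$ is a common fixed point of all $\mathrm{Ad}(s)$. It is convenient to reformulate the target: it suffices, for each finite $E\subseteq\cI(A)$ and $\ve>0$, to produce a finitely supported probability measure $\mu$ on $\CI(A)$ with equal weights whose pushforward under $v\mapsto vs$ lies within $\ve$ of $\mu$ in the Wasserstein distance for the metric $\|\cdot\|_{\rC}$, for every $s\in E$ --- because the optimal transport between two such uniform measures is a permutation (Birkhoff--von Neumann), and after discarding the small fraction of badly matched atoms (enlarging $F$ and worsening $\ve$) one reads off permutations $\rho_s$ with $\|vs-\rho_s(v)\|_{\rC}<\ve$. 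Such $\mu$ should be manufactured from $\mathcal{E}$, or equivalently from matrix-algebra factorizations of $\mathrm{id}_{A^{**}}$ (rewriting the maps $\IM_k\to A^{**}$ as column isometries over $A^{**}$ and pushing their entries into $A$ by Kaplansky density with a functional-calculus correction), by the standard Day--Namioka convexity argument that turns a weak$^*$-approximation of the invariant $\mathcal{E}$ by barycentres $\int\Phi_v\,d\mu$ into a norm-small perturbation of $\mu$ under pushforward by $v\mapsto vs$.

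I expect the main obstacle to be precisely this last derivation --- carrying the abstract averaging encoded by injectivity/semidiscreteness of the very large von Neumann algebra $A^{**}$ down to an honest F{\o}lner property for $\cI(A)\acts(\CI(A),\|\cdot\|_{\rC})$ realised by finite sets of column isometries with entries in $A$ --- the delicate points being that only strong$^*$-density of $A$ in $A^{**}$ is available, that the weak$^*$-to-norm passage in the convexity argument must be arranged on the right space, and that the final extraction of permutations controlled pointwise (not merely on average) in $\|\cdot\|_{\rC}$ needs the Birkhoff--von Neumann cleanup. The direction ``condition $\Rightarrow$ nuclear'', by contrast, is essentially formal once the dictionary $(a)$--$(c)$ is in place.
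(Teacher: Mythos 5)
Your ``condition $\Rightarrow$ nuclear'' direction is correct and is essentially the argument the paper relies on: average the maps $T\mapsto\sum_i v_i^*Tv_i$ over $F$, use $\Phi_{vs}=\mathrm{Ad}(s)\circ\Phi_v$ and the Lipschitz estimate in $\|\cdot\|_{\rC}$, and extract a point--weak$^*$ cluster point, which is a conditional expectation of $\IB(\cH)$ onto $A'$; injectivity of $A'$ in the universal representation gives nuclearity. No complaints there.

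The converse, which is the substantial half, has a genuine gap. The paper (following Kishimoto--Sakai and Sato) does \emph{not} run an abstract Day--Namioka argument on the conditional expectation $\mathcal{E}\colon\IB(\cH)\to A'$. It uses the CPAP concretely: a single u.c.p.\ map $\vp\colon\IM_k\to A$ whose image almost contains $E$, a lift of each $s\in E$ to a unitary $\tilde{s}\in\cU(\IM_k)$ (this needs a separate lemma), the Kasparov--Stinespring dilation of $\vp$ on the Hilbert module $\ell_2\otimes A$ realizing $\vp$ via a column isometry $u\mapsto\pi(u)\zeta$ with entries (after truncation and a functional-calculus correction) in $A$, and --- crucially --- the fact that the \emph{compact} group $\cU(\IM_k)$ has exact F{\o}lner sets for right translation: an optimal $\ve$-covering $\tilde{G}$ of $\cU(\IM_k)$ and its right translate $\tilde{G}\tilde{s}$ are both optimal, hence can be matched by a genuine permutation moving each point by at most $2\ve$. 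The finite set $F\subset\CI(A)$ is the image of $\tilde{G}$ and the permutations $\rho_s$ come from this covering trick. Your plan replaces all of this by ``weak$^*$-approximate $\mathcal{E}$ by barycentres $\int\Phi_v\,d\mu$ and upgrade to norm almost-invariance of $\mu$ in Wasserstein distance by convexity, then apply Birkhoff--von Neumann.'' That step does not go through as stated: the Mazur/Day convexification happens in the space of \emph{maps}, and norm-closeness of the barycentres $\int\Phi_v\,d\mu$ and $\int\Phi_{vs}\,d\mu$ gives no control on the distance between the measures $\mu$ and the pushforward of $\mu$ under $v\mapsto vs$, because $\mu\mapsto\int\Phi_v\,d\mu$ is highly non-injective and contractive in the wrong direction. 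You would be trying to invert a barycentre map with no modulus of injectivity, and you have not explained how to land the atoms of $\mu$ in $\CI(A)$ (rather than $\CI(A^{**})$) in the first place. The missing idea is precisely the compact-group/optimal-covering mechanism (or some substitute) that manufactures a finite set together with honest permutations; everything else in your sketch is either formal or acknowledged by you as an ``expected obstacle'' rather than resolved.
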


If $A$ is moreover properly infinite, 
then one can take isometries $s_1,s_2,\ldots$ with mutually 
orthogonal ranges and replace $\CI(A)$ with $\cI(A)$ 
via the right $\cI(A)$-equivariant isometric map 
\[
\CI(A)\ni(a_1,\ldots,a_n)\mapsto\sum_i s_ia_i \in\cI(A).
\]
By taking a limit point of the uniform probability measures 
on suitable $F$s, 
one obtains a right invariant mean on $\cI(A)$. 
This proves the `if' part of Theorem~\ref{thm:1}. 
The proof of the `only if' part is standard (\cite{connes:jfa}). 
Take a universal representation $A\subset\IB(\cH)$ and consider 
for each $x\in\IB(\cH)$ and $\xi,\eta\in\cH$ 
the uniformly continuous bounded function 
\[
f_{x,\xi,\eta}\colon \cI(A)\ni s \mapsto \ip{s^*xs\xi,\eta} \in \IC.
\] 
If $\cI(A)$ admits a right invariant mean $m$, 
then by the Riesz representation theorem 
there is $\Phi(x) \in \IB(\cH)$ that satisfies 
$m(f_{x,\xi,\eta})=\ip{\Phi(x)\xi,\eta}$ for every $\xi,\eta\in\cH$. 
It is not hard to see that $x\mapsto\Phi(x)$ is a conditional expectation 
from $\IB(\cH)$ onto $A'$. 
This implies that $A^{**}\cong (A')^{\mathrm{op}}$ 
is injective (hyperfinite) and hence that $A$ is nuclear (\cite{connes,ce}). 
In passing, we observe that if $\cI(A)$ admits a left invariant mean $m'$, 
then for any unit vector $\xi$, 
the map $x\mapsto m'(f_{x,\xi,\xi})$ defines a tracial state on $A$ 
(in fact an $A$-central state on $\IB(\cH)$).

On the other hand, if $A$ is moreover with the QTS property 
(as opposed to proper infiniteness), then by 
Dixmier's averaging (Theorem~1 in \cite{dix}),  
there is a finite sequence $w_1,\ldots,w_k\in \cU(A)$ that satisfies
\[
\|k^{-1}\sum_{i,j} w_ja_ia_i^*w_j^* - 1 \|<\ve
\mbox{ and }
\|k^{-1}\sum_{i,j} w_j(a_is - \rho_s(a)_i)(a_is - \rho_s(a)_i)^*w_j^*\|<\ve
\] 
for every $s\in E$ and $a\in F$. 
Thus by replacing each $a\in F$ with $(k^{-1/2}w_ja_i)_{i,j}$ 
and retaining $\{\rho_s\}$, we obtain the following.
\begin{cor}\label{cor}
Let $A$ be a unital nuclear $\mathrm{C}^*$-alge\-bra with the QTS property, 
Then, for every finite subset 
$E\subset\cU(A)$ and every $\ve>0$ 
there are a non-empty finite subset $F\subset\CI(A)$ and 
permutations $\{\rho_u : u\in E\}$ on $F$ such that 
$\|\sum_i a_ia_i^* - 1 \|<\ve$ and 
$\|a u - \rho_u(a) \|_{\rRC} < \ve$ for every $u\in E$ and $a\in F$. 
\end{cor}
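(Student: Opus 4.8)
The plan is to combine the Theorem stated above with Dixmier's averaging theorem: the Theorem supplies a finite family of column isometries whose columns are already exactly correct but whose rows need not be, and averaging over unitary conjugations corrects the rows without disturbing the columns. Fix a finite $E\subset\cU(A)$ and $\ve>0$, and choose $\ve_0>0$ small (how small will be clear at the end). Since $A$ is nuclear and $\cU(A)\subset\cI(A)$, the Theorem yields a non-empty finite $F_0\subset\CI(A)$ and permutations $\{\rho_u:u\in E\}$ of $F_0$ with $\|au-\rho_u(a)\|_{\rC}<\ve_0$ for all $u\in E$ and $a=(a_1,\dots,a_n)\in F_0$.

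For such $a$ and $u$ write $c_i:=a_iu-\rho_u(a)_i$. Every $\tau\in T(A)$ is a trace, so $\tau(\sum_i a_ia_i^*)=\tau(\sum_i a_i^*a_i)=\tau(1)=1$ and $\tau(\sum_i c_ic_i^*)=\tau(\sum_i c_i^*c_i)\le\|\sum_i c_i^*c_i\|=\|au-\rho_u(a)\|_{\rC}^2<\ve_0^2$; by the QTS property the same bounds hold for every tracial state on every quotient of $A$. Thus Dixmier's averaging theorem (Theorem~1 in \cite{dix}) applies to the finitely many positive elements $\sum_i a_ia_i^*$ $(a\in F_0)$ and $\sum_i c_ic_i^*$ $(u\in E,\ a\in F_0)$ and produces $w_1,\dots,w_k\in\cU(A)$ with
\[
\bigl\|k^{-1}{\textstyle\sum_{i,j}}w_ja_ia_i^*w_j^*-1\bigr\|<\ve
\quad\text{and}\quad
\bigl\|k^{-1}{\textstyle\sum_{i,j}}w_jc_ic_i^*w_j^*\bigr\|<\ve_0
\]
for all $u\in E$ and $a\in F_0$.

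Now put $b(a):=(k^{-1/2}w_ja_i)_{i,j}$, $F:=\{b(a):a\in F_0\}$, and $\rho_u(b(a)):=b(\rho_u(a))$. Then $\sum_{i,j}(k^{-1/2}w_ja_i)^*(k^{-1/2}w_ja_i)=k^{-1}\sum_{i,j}a_i^*a_i=\sum_i a_i^*a_i=1$, so $F\subset\CI(A)$ and each $\rho_u$ permutes $F$; the row sum $\sum_{i,j}(k^{-1/2}w_ja_i)(k^{-1/2}w_ja_i)^*=k^{-1}\sum_j w_j\bigl(\sum_i a_ia_i^*\bigr)w_j^*$ is within $\ve$ of $1$; and since $b(a)u-\rho_u(b(a))=(k^{-1/2}w_jc_i)_{i,j}$, its column sum $\sum_i c_i^*c_i$ has norm $<\ve_0^2$ while its row sum $k^{-1}\sum_j w_j\bigl(\sum_i c_ic_i^*\bigr)w_j^*$ has norm $<\ve_0$, whence $\|b(a)u-\rho_u(b(a))\|_{\rRC}<\ve_0+\ve_0^{1/2}$. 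Choosing $\ve_0<\ve^2/4$ at the outset makes this $<\ve$, and the corollary follows.

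The algebra above is routine; the weight is carried by the two inputs, namely the already-assumed Theorem and the precise form of Dixmier's averaging for $\mathrm{C}^*$-algebras with the QTS property, which lets a positive element whose trace is uniformly pinned (here to $1$, respectively to something $<\ve_0^2$) be averaged in norm arbitrarily close to that value times the unit. The one point deserving attention is that a single tuple $w_1,\dots,w_k$ must serve the whole finite family at once; this is arranged by averaging one element at a time, using that once an element has been driven close to a scalar it is essentially fixed by the conjugations treating the remaining elements.
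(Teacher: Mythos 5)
Your proof is correct and follows essentially the same route as the paper: apply the Kishimoto--Sakai/Sato-type theorem to get a column-isometric F{\o}lner set, note that all traces (on all quotients, via the QTS property) pin the row sums $\sum_i a_ia_i^*$ to $1$ and the row sums of the differences to something small, invoke Dixmier averaging from Theorem~1 of \cite{dix} to realize these values in norm, and replace each $a$ by $(k^{-1/2}w_ja_i)_{i,j}$. The only difference is that you carry out the $\ve_0$ versus $\ve$ bookkeeping and the simultaneity of the averaging explicitly, which the paper leaves implicit.
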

\section{Proof of Theorem~\ref{thm:2}}
\begin{proof}[Proof of Theorem~\ref{thm:2}]
We consider the following strengthening of condition $(\mbox{\rm \ref{item:2}})$:
\begin{itemize}
\item[$(\mbox{\rm \ref{item:2}}')$] 
$\cU(A)$ is skew-amenable in the skew-strong topology. 
\end{itemize}
Here the \emph{skew-strong topology} on $A$ is given by 
the directed family of semi-norms $\|\,\cdot\,\|_\vp$, $\vp\in A^*_+$, 
where $\| a \|_\vp := \vp(aa^*)^{1/2}$ for $a\in A$. 
Be aware that it is \emph{not} the more common $\| a \|_\vp=\vp(a^*a)^{1/2}$.
As $A^*$ is spanned by $A^*_+$, the Cauchy--Schwarz inequality 
implies that the skew-strong topology is finer than the weak topology. 
Thus $(\mbox{\rm \ref{item:2}}')$ $\Rightarrow$ $(\mbox{\rm \ref{item:2}})$ holds. 

Let's assume $(\mbox{\rm \ref{item:1}})$ and prove $(\mbox{\rm \ref{item:2}}')$. 
By Theorem 4.5 in \cite{st}, condition $(\mbox{\rm \ref{item:1}})$ means 
that for every finite subset $E\subset\cU(A)$ and $\ve>0$ 
there are a non-empty finite subset $F\subset\cU(A)$ and permutations 
$\{\rho_u : u\in E\}$ on $F$ that satisfy 
\[
|F|^{-1}\sum_{v\in F} \sum_{u\in E} \| vu - \rho_u(v) \|_{T(A)}^2 <\ve.
\]
Hence by Dixmier's averaging (Theorem~1 in \cite{dix}), there is a finite 
sequence $w_1,\ldots,w_k$ in $\cU(A)$ such that 
\[
\| k^{-1}\sum_{j=1}^k w_j \Bigl( |F|^{-1}\sum_{u\in E,\, v\in F}
(v u - \rho_u(v))(v u - \rho_u(v))^* \Bigr) w_j^*\| < \ve.
\]
It follows that for every state $\vp$ on $A$ there is $j$ such that 
\[
|F|^{-1}\sum_{u\in E,\, v\in F}\|w_jv u - w_j\rho_u(v) \|_\vp^2 < \ve.
\]
By replacing $F$ with $\{ w_jv : v\in F\}$ and retaining $\{\rho_u\}$, 
one sees skew-amenability of $\cU(A)$ in $\|\,\cdot\|_\vp$. 
Since the semi-norms $\| \,\cdot\|_\vp$, $\vp\in A^*_+$, are directed, 
this proves $(\mbox{\rm \ref{item:2}}')$.

The implication 
$(\mbox{\rm \ref{item:2}})$ $\Rightarrow$ $(\mbox{\rm \ref{item:3}})$ is 
Proposition 4.4 in \cite{ast}. 
We prove $(\mbox{\rm \ref{item:3}})$ $\Rightarrow$ $(\mbox{\rm \ref{item:1}})$ 
assuming that $A$ has only finitely many extremal tracial states $\tau_1,\ldots,\tau_k$.  
We put $\tau:=k^{-1}\sum_{j=1}^k\tau_j \in T(A)$ and observe that  
$\| a \|_{T(A)}^2 \le k\| a\|_\tau^2$ for every $a\in A$. 
By Kaplansky's density theorem, 
$\cU(A)$ is $\|\,\cdot\,\|_\tau$-dense in $\cU(\pi_\tau(A)'')$. 
The rest is standard: amenability of $\cU(\pi_\tau(A)'')$ 
is inherited by the dense subgroup $\cU(A)$. 
We include the proof for completeness. 
Let a finite subset $E\subset\cU(A)$ and $\ve>0$ be given. 
Since $\pi_\tau(A)''$ is hyperfinite, there is a finite-dimensional 
von Neumann subalgebra $B\subset \pi_\tau(A)''$ such that 
$\dist_\tau(u,\cU(B))<\ve$ for every $u\in E$, i.e.,  
for each $u\in E$, there is $\tilde{u}\in\cU(B)$ with $\| \tilde{u} - u \|_\tau<\ve$.
Since $\cU(B)$ is a compact group, there is a finite subset 
$\{ \tilde{v}_1,\ldots,\tilde{v}_n\} \subset \cU(B)$ 
and permutations $\{\rho_u : u\in E\}$ on $\{1,\ldots,n\}$ such that 
$\| \tilde{v}_i \tilde{u} - \tilde{v}_{\rho_u(i)} \|_\tau <\ve$ 
for every $\tilde{u} \in \tilde{E}$ and $i\in\{1,\ldots,n\}$. 
For each $i$, take $v_i\in\cU(A)$ with $\| v_i - \tilde{v}_i \|_\tau<\ve$ 
and put $F:=\{ v_1,\ldots,v_n\}$. 
Since 
\[
\| v_iu - \tilde{v}_i\tilde{u} \|_\tau
 \le \| (v_i - \tilde{v}_i)u\|_\tau + \| \tilde{v}_i(u-\tilde{u})\|_\tau \le 2\ve
\]
(this is where the tracial property is indispensable), 
one has $\| v_iu - v_{\rho_u(i)} \|_\tau \le 4\ve$ for every $u\in E$ and $i$. 
By adjusting $\ve>0$, we are done.
\end{proof}

\begin{rem}
Since every norm separable subset of $\cR$ is contained in 
a separable simple monotracial $\mathrm{C}^*$-sub\-alge\-bra 
of $\cR$ (see e.g., Lemma~9 in \cite{dix}), 
there exists a unital separable simple monotracial 
non-exact (\cite{kirchberg}) $\mbox{C}^*$-alge\-bra $A$ 
whose unitary group $\cU(A)$ is skew-amenable but not 
amenable in the weak topology. 
\end{rem}
\section{Further results that may be useful in the future}
The rest of this note handles the case of $\mbox{C}^*$-alge\-bras 
with infinitely many extremal tracial states. 
Exactness plays a crucial role, 
as it assures certain commutativity of ultraproduct 
and tensor product (\cite{kirchberg}). 
We first collect some useful facts about the free semi-circular systems.  

We recall the free semi-circular system $\{ s_i : i=1,2,\ldots\}$.
Let $\cO_\infty$ be the Cuntz algebra generated by 
isometries $l_i$ with mutually orthogonal ranges 
and let $s_i := l_i+l_i^*$.
Then, $\cC := \mbox{C}^*(\{s_i: i=1,2,\ldots\})$ is $*$-iso\-mor\-phic 
to the reduced free product of the copies of $C([-2,2])$ 
with respect to the Lebesgue measure 
(see Section 2.6 in \cite{vdn}), and the corresponding tracial
state $\tau_\cC$ coincides with the restriction to $\cC$ of 
the vacuum state on $\cO_\infty$.
We note that the reduced free group $\mbox{C}^*$-alge\-bra 
$\mbox{C}^*_{\mathrm{r}}\IF_d$ and 
the free semi-circular $\mbox{C}^*$-alge\-bra $\cC$ 
embed into each other, because $\mbox{C}^*_{\mathrm{r}}\IZ$ 
and $C([-2,2])$ embed into each other. 
Also, the countable free groups of rank at least two embeds 
into each other as groups. 

\begin{thm}\label{thm:fsemb}
Let $\cZ$ denote the Jiang--Su algebra and $\cZ^\omega$ 
the ultrapower w.r.t.\ a free ultrafilter on $\IN$. 
Then, the free semi-circular $\mbox{C}^*$-alge\-bra $\cC$ 
embeds into $\cZ^\omega$. 
\end{thm}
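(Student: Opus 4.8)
The plan is to locate, inside $\cZ$ itself, finite tuples of self-adjoint elements that model a free semicircular family not only in joint $*$-moments but in operator norm; an $\omega$-limit of such tuples then furnishes the embedding. The ``moment'' half — producing tuples whose mixed $*$-moments converge to the free ones — is just Voiculescu's asymptotic freeness for independent GUE matrices transported into $\cZ$, but by itself it only embeds $\mathrm{C}^*_{\mathrm r}\IF_d$ into the tracial von Neumann closure; to land in $\cZ^\omega$ one needs the operator norm of every noncommutative $*$-polynomial in the tuple to converge to its free value, which is the Haagerup--Thorbj\o rnsen strong convergence theorem together with its strong asymptotic freeness refinements (Male, Collins--Guionnet--Parraud). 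The geometric input is the prime dimension-drop structure of $\cZ$: for coprime $p<q$ put $Z_{p,q}=\{f\in C([0,1],M_p\otimes M_q):f(0)\in M_p\otimes 1_q,\ f(1)\in 1_p\otimes M_q\}$; there are unital (hence isometric) embeddings $\iota_k\colon Z_{p_k,q_k}\hookrightarrow\cZ$, and one may take $p_k<q_k$ distinct primes with $p_k\to\infty$. Note $\tau_\cZ\circ\iota_k$ is a tracial state on $Z_{p_k,q_k}$, hence of the form $f\mapsto\int_0^1\mathrm{tr}_{p_kq_k}(f(t))\,d\nu_k(t)$ for some probability measure $\nu_k$ on $[0,1]$.

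Fix $k$, write $p=p_k$, $q=q_k$, $N=pq$, and fix generic realizations of mutually independent GUE ensembles $Y_1,\dots,Y_d\in(M_p)_{\mathrm{sa}}$, $X_1,\dots,X_d\in(M_N)_{\mathrm{sa}}$ and $Z_1,\dots,Z_d\in(M_q)_{\mathrm{sa}}$, each normalized to model $d$ free semicirculars of variance $1$. Build a two-stage ``free rotation'' path $f_1,\dots,f_d\in C([0,1],M_N)$ by
\[
f_i(t)=
\begin{cases}
\cos(\pi t)\,(Y_i\otimes 1_q)+\sin(\pi t)\,X_i, & 0\le t\le\tfrac12,\\[1mm]
\cos\!\big(\pi(t-\tfrac12)\big)\,X_i+\sin\!\big(\pi(t-\tfrac12)\big)\,(1_p\otimes Z_i), & \tfrac12\le t\le 1,
\end{cases}
\]
so that $f_i=f_i^{*}$, the two formulas agree at $t=\tfrac12$, and $f_i(0)=Y_i\otimes 1_q\in M_p\otimes 1_q$, $f_i(1)=1_p\otimes Z_i\in 1_p\otimes M_q$; thus $f_i\in Z_{p,q}$. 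The design guarantees that for \emph{every} $t\in[0,1]$ the tuple $(f_1(t),\dots,f_d(t))$ is an asymptotically exact matrix model for a free semicircular family: a combination $\cos\theta\cdot A+\sin\theta\cdot B$ of two free semicirculars of variance $1$ is again a semicircular of variance $1$, and mutual independence across $i$ keeps the $d$ resulting elements free; the intermediate passage through the fresh ensemble $\{X_i\}$ is what makes the endpoint ensembles $\{Y_i\otimes 1_q\}$ and $\{1_p\otimes Z_i\}$ free from it (a direct rotation between $Y_i\otimes 1_q$ and $1_p\otimes Z_i$ would fail, since those commute). Since $\{Y_i\otimes 1_q,\,X_i,\,1_p\otimes Z_i:i\le d\}$ is a strongly convergent Gaussian ensemble, strong convergence applied uniformly over the compact family of polynomials expressing $p(f_1(t),\dots,f_d(t))$, $t\in[0,1]$, in that ensemble yields, for each $*$-polynomial $p$,
\[
\bigl\|p(f_1,\dots,f_d)\bigr\|_{Z_{p,q}}=\sup_{t\in[0,1]}\bigl\|p(f_1(t),\dots,f_d(t))\bigr\|_{M_N}\ \longrightarrow\ \bigl\|p(s_1,\dots,s_d)\bigr\|_{\cC}\qquad(N\to\infty),
\]
uniformly over finite families of polynomials, and likewise $\int_0^1\mathrm{tr}_N(p(f(t)))\,d\nu_k(t)\to\tau_\cC(p(s))$ because the integrand converges uniformly in $t$ by Voiculescu's theorem.

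Now carry this out for every $k$ using an exhausting sequence of finite polynomial families, to within $1/k$, and put $t_i:=(\iota_k(f_i^{(k)}))_k\in\cZ^\omega$. Since the $\iota_k$ are isometric, $\|p(t_1,\dots,t_d)\|_{\cZ^\omega}=\lim_\omega\|p(f^{(k)})\|_{Z_{p_k,q_k}}=\|p(s_1,\dots,s_d)\|_\cC$ for every $*$-polynomial $p$, so $s_i\mapsto t_i$ extends to an isometric, unital, trace-preserving $*$-embedding $\cC\hookrightarrow\cZ^\omega$; the case of countably many generators is covered by letting $d=d_k\to\infty$. The main obstacle is precisely the input invoked above: strong convergence — rather than mere convergence of moments — for the combined ensemble $\{Y_i\otimes 1_q,\,X_i,\,1_p\otimes Z_i\}$, i.e.\ strong asymptotic freeness of independent Gaussian blocks living on $M_p$, $M_N$ and $M_q$ simultaneously inside $M_N$, together with the uniformity of this convergence in the path parameter $t$ (an equicontinuity argument on a compact family of polynomials, using that the ensemble has uniformly bounded operator norm). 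The remaining points — fitting the path into $Z_{p,q}$, identifying $\nu_k$, the ultrapower limit, and verifying that the resulting map is a trace-preserving $*$-embedding — are routine.
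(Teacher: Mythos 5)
Your proposal is correct in outline but takes a genuinely different route at the decisive step, namely how to interpolate between the leg $\IM_p\otimes 1_q$ and the leg $1_p\otimes\IM_q$ of the dimension drop algebra. The paper performs the interpolation entirely in the \emph{limit} algebra: it first uses strong convergence of tensor products of independent G.U.E.\ ensembles to embed $\cC_2\otimes\cC_2$ into the matrix ultraproduct, pushes this through $C([0,1],\,\cdot\,)$ by exactness, and then connects the two legs by a path of unitaries $g_i(t)=u_i(1)\otimes u_i(2t)$ (resp.\ $u_i(2-2t)\otimes u_i(1)$) built from Haar unitaries $u_i=\exp(\sqrt{-1}h_i)$ with $h_i\in\mathrm{C}^*(\{1,s_i\})$; Fell's absorption principle shows that at \emph{every} $t$ the pair $\{g_1(t),g_2(t)\}$ generates a canonical copy of $\mathrm{C}^*_{\mathrm{r}}\IF_2$, so the path defines an exact (not merely approximate) isometric copy of $\mathrm{C}^*_{\mathrm{r}}\IF_2$ in $\cZ^\omega$, and one concludes via the mutual embeddability of $\cC$ and $\mathrm{C}^*_{\mathrm{r}}\IF_d$. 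You instead interpolate at the matrix level through a fresh ``pivot'' G.U.E.\ family $X_i$ on $\IM_{pq}$, which is a sound idea (your rotation computation in the limit $(\cC_d\otimes\cC_d)*\cC_d$ is correct: orthogonal combinations of a free semicircular family are again free semicircular), but it requires a strictly stronger random-matrix input: on top of strong convergence of $\{Y_i\otimes 1_q,\,1_p\otimes Z_j\}$ (the tensor-product theorem the paper also uses), you must adjoin the $X_i$ strongly freely, i.e.\ invoke the Collins--Guionnet--Parraud theorem conditionally on the tensor-leg matrices, and then run a uniform-in-$t$ equicontinuity argument. Two points of care: your phrase ``strong asymptotic freeness of the three blocks'' must not be read literally, since $Y_i\otimes 1_q$ and $1_p\otimes Z_j$ commute and the correct limit is $(\cC_d\otimes\cC_d)*\cC_d$ rather than a free family of $3d$ semicirculars (you are aware of this, but the statement of the needed input should reflect it); and the trace-preservation discussion is not needed, since isometry on $*$-polynomials already yields the embedding. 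What the paper's route buys is that the only nontrivial analytic input is the tensor-product strong convergence theorem, with the interpolation handled by soft $\mathrm{C}^*$-algebra; what your route buys is a direct embedding of $\cC$ without passing through $\mathrm{C}^*_{\mathrm{r}}\IF_2$ and its mutual embeddability with $\cC$.
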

\begin{proof}
Recall that the Jiang--Su algebra $\cZ$ is a (unique) 
simple monotracial $\mbox{C}^*$-alge\-bra which arises 
as an inductive limit of some prime dimension drop 
$\mbox{C}^*$-alge\-bras 
\[
\{ f\in C([0,1],\IM_{p(n)}\otimes\IM_{q(n)}) : 
f(0)\in\IM_{p(n)}\otimes\IC1,\ 
f(1)\in\IC1\otimes\IM_{q(n)}\},
\]
where $(p(n),q(n))$ are pairs of relatively prime numbers. 
We may assume $p(n)\gg q(n)$. 
We take i.i.d.\ G.U.E.\ random matrices 
$x_1(n),x_2(n) \in \IM_{p(n)}$ and $y_1(n),y_2(n) \in \IM_{q(n)}$. 
Then by \cite{ht,cgp,pisier} 
(or more advanced \cite{bc1,bc2} if we do not want to assume $p(n)\gg q(n)$),
the tuple $(x_1(n)\otimes 1_{q(n)}, x_2(n)\otimes 1_{q(n)}, 1_{p(n)}\otimes y_1(n), 1_{p(n)}\otimes y_2(n))$ strongly converges to 
$(s_1\otimes 1,s_2\otimes1,1\otimes s_1,1\otimes s_2)$ 
in $\cC_2\otimes\cC_2$, where $\cC_2$ is the 
$\mbox{C}^*$-alge\-bra generated by 
the free semicircular system $\{ s_1, s_2\}$;
in other words, there is an embedding 
\[
\cC_2\otimes \cC_2\hookrightarrow
 \Bigl(\prod_n \IM_{p(n)}\otimes\IM_{q(n)}\Bigr)/\omega
\] 
of $\cC_2\otimes\cC_2$ into 
the norm ultraproduct $(\prod_n \IM_{p(n)}\otimes\IM_{q(n)})/\omega$.
Since $C([0,1])$ is exact, we may view by \cite{kirchberg}  that 
\[
C([0,1],\Bigl(\prod_n \IM_{p(n)}\otimes\IM_{q(n)}\Bigr)/\omega)
  \subset \Bigl(\prod_n C([0,1],\IM_{p(n)}\otimes\IM_{q(n)})\Bigr)/\omega
\]
and thus
\[
B:=\{f \in C([0,1],\cC_2\otimes\cC_2) : f(0)\in \cC_2\otimes\IC1,\ f(1)\in\IC1\otimes \cC_2\}
\hookrightarrow \cZ^\omega.
\]
For each $i=1,2$ we take $h_i=h_i^* \in \mathrm{C}^*(\{1,s_i\})$ 
such that the unitary element $u_i:=\exp(\sqrt{-1} h_i)$ 
satisfies $\tau_{\cC_2}(u_i^n)=0$ for all $n\neq0$. 
We put $u_i(t):= \exp( t \sqrt{-1} h_i)$ which connects  
$u_i(0)=1$ to $u_i(1)=u_i$. 
We define $g_i\in C([0,1],\cC_2\otimes\cC_2)$ by 
$g_i(t) := u_i(1) \otimes u_i(2t)$ for $t\in[0,1/2]$ 
and $g_i(t)=u_i(2- 2t)\otimes u_i(1)$ for $t\in[1/2,1]$. 
Then $g\in B$ and, for each $t$, the pair $\{g_1(t), g_2(t)\}$ 
is unitarily equivalent to the standard generating pair of 
$\mathrm{C}^*_{\mathrm{r}}\IF_2 \otimes \IC1$, by Fell's absorption principle.  
Thus, $\{g_1, g_2\}$ itself generates a copy of 
$\mathrm{C}^*_{\mathrm{r}}\IF_2$ inside $\cZ^\omega$.
\end{proof}

For every finite sequence $a=(a_1,\ldots,a_n)$ 
in a $\mbox{C}^*$-alge\-bra $A$, we write
\[
\sm(a) := \sum_i s_i\otimes a_i \in \cC\otimes A, 
\]
where $\{ s_i\}$ is a free semi-circular system. 
We note that 
\[
\sm(a) = \sum_i l_i \otimes a_i + (\sum_i l_i \otimes a_i^*)^* 
 =: S+T^*
\]
in $\cO_\infty\otimes A$ and that $S$ and $T$ satisfy
\[
S^*S = 1 \otimes \sum_i a_i^*a_i\mbox{ and } T^*T = 1 \otimes \sum_i a_ia_i^*.
\]
Thus, in particular (see Definition~\ref{defn:coliso} for $\|\,\cdot\,\|_{\rRC}$)
\[
\| \sm(a) \| \le \|a\|_{\rRC}.
\]
\begin{lem}\label{lem:st}
For every $\ve>0$, there is $\delta>0$ that satisfies the following. 
For every finite sequence $a$ in a $\mbox{C}^*$-alge\-bra $A$ with $T(A)\neq\emptyset$, 
if $\|\sum_i a_i^*a_i -1\|<\delta$ and $\|\sum_i a_ia_i^*-1\|<\delta$, then 
\[
\| 1_{[0,\delta)}(|\sm(a)|) \|_{T(\cC\otimes A)}<\ve.
\]
Here $1_{[0,\delta)}(|\sm(a)|) \in (\cC\otimes A)^{**}$ is the spectral projection 
for $|\sm(a)|$ corresponding to $[0,\delta)$. 
\end{lem}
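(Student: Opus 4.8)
The plan is to Hermitize, reduce the claim to a bound on the spectral mass near $0$ of a self-adjoint operator-valued semicircular element, and use that $\cC$ is monotracial to handle all traces on $\cC\otimes A$ at once. First I would pass to $2\times2$ matrices. Put
\[
\tilde X:=\left(\begin{smallmatrix}0&\sm(a)\\ \sm(a)^{*}&0\end{smallmatrix}\right)=\sum_i s_i\otimes\hat a_i\in\cC\otimes\IM_2(A),\qquad \hat a_i:=\left(\begin{smallmatrix}0&a_i\\ a_i^{*}&0\end{smallmatrix}\right)=\hat a_i^{*},
\]
so that $\sum_i\hat a_i^{2}$ is the diagonal matrix with entries $\sum_i a_ia_i^{*}$ and $\sum_i a_i^{*}a_i$, hence within $\delta$ of $1$, and $\|\tilde X\|=\|\sm(a)\|\le\|a\|_{\rRC}<2+2\delta$. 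Since $\tilde X=\tilde X^{*}$, since $\tilde X^{2}$ is the diagonal matrix with entries $\sm(a)\sm(a)^{*}$ and $\sm(a)^{*}\sm(a)$, and since any trace equates the distributions of $\sm(a)\sm(a)^{*}$ and $\sm(a)^{*}\sm(a)$, one gets $\phi\bigl(1_{[0,\delta)}(|\sm(a)|)\bigr)=(\phi\otimes\mathrm{tr}_2)\bigl(1_{(-\delta,\delta)}(\tilde X)\bigr)$ for every $\phi\in T(\cC\otimes A)$. As $T(\IM_2(\cC\otimes A))$ consists exactly of the $\phi\otimes\mathrm{tr}_2$, and the square of the uniform $2$-norm of a projection is the supremum of its trace values, it then suffices to find $\delta=\delta(\ve)>0$ with $\psi\bigl(1_{(-\delta,\delta)}(\tilde X)\bigr)<\ve^{2}$ for all $\psi\in T(\cC\otimes\IM_2(A))$.

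The point that makes this uniform-over-traces statement accessible is that $\cC$ is monotracial and $\pi_{\tau_\cC}(\cC)''$ is a $\mathrm{II}_1$-factor (namely the free group factor). Thus, for any trace $\psi$ on $\cC\otimes\IM_2(A)$ one has $\psi|_{\cC\otimes1}=\tau_\cC$, the weak closure $P:=\pi_\psi(\cC\otimes1)''$ is $*$-isomorphic to $\pi_{\tau_\cC}(\cC)''$ and hence a factor, and $P$ commutes with $Q:=\pi_\psi(1\otimes\IM_2(A))''$. Since the $\psi$-preserving conditional expectation onto a factor carries its relative commutant into the scalars, $P$ and $Q$ are $\psi$-independent and $\pi_\psi(\cC\otimes\IM_2(A))''\cong P\mathbin{\overline\otimes}Q$ with the product trace. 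In this model $\pi_\psi(\tilde X)=\sum_i s_i\otimes\hat a_i$ with $\{s_i\}$ a free semicircular family tensor-independent from $Q$, and a routine expansion of mixed moments (only non-crossing pairings contribute, because the $s_i$ are free) shows that $\pi_\psi(\tilde X)$ is a $Q$-valued semicircular element with covariance $\eta(q)=\sum_i\hat a_iq\hat a_i$, where $\|\eta(1)-1\|=\|\sum_i\hat a_i^{2}-1\|<\delta$.

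Finally I would make the covariance a scalar multiple of the identity. Because $0\le(1+\delta)1-\eta(1)$, adjoining a further generator $s_{n+1}$ of the same free semicircular family with coefficient $\hat a_{n+1}:=\bigl((1+\delta)1-\eta(1)\bigr)^{1/2}\in Q$ produces $\tilde Y:=\pi_\psi(\tilde X)+s_{n+1}\otimes\hat a_{n+1}$, a $Q$-valued semicircular element whose covariance $\eta'$ satisfies $\eta'(1)=(1+\delta)1$. An ``innermost-pair'' induction then gives $E_Q[\tilde Y^{\,k}]=(1+\delta)^{k/2}\,|\mathrm{NC}_2(k)|\cdot1$, so that under $\psi$ the bounded element $\tilde Y$ has exactly the semicircular law of variance $1+\delta$, whence $\psi\bigl(1_{[-r,r]}(\tilde Y)\bigr)\le 2r/\pi$ for every $r>0$. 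Now $\|\tilde Y-\pi_\psi(\tilde X)\|=2\bigl\|\bigl((1+\delta)1-\eta(1)\bigr)^{1/2}\bigr\|\le 2\sqrt{2\delta}=:\kappa$, so choosing a continuous $f$ with $1_{[-\delta,\delta]}\le f\le 1_{[-r,r]}$ and $\mathrm{Lip}(f)\le(r-\delta)^{-1}$ gives $\psi\bigl(1_{(-\delta,\delta)}(\tilde X)\bigr)\le\psi\bigl(f(\tilde Y)\bigr)+\mathrm{Lip}(f)\,\|\tilde Y-\pi_\psi(\tilde X)\|\le 2r/\pi+\kappa/(r-\delta)$; with $r=\delta+\sqrt\kappa$ this is $O(\delta^{1/4})$, hence $<\ve^{2}$ once $\delta$ is small enough. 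The one genuinely delicate step is this last balancing: the perturbation $\kappa$ is of order $\sqrt\delta$ and so dwarfs the target width $\delta$, so one cannot compare spectral projections on an interval of width $O(\kappa)$; instead one compares on an interval of width $\asymp\sqrt\kappa$, narrow enough that the (now exactly) semicircular mass on it still tends to $0$, yet wide enough to absorb the Lipschitz cost of $f$.
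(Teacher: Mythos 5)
Your argument is correct in outline and in all the essential computations, but it is a genuinely different proof from the one in the paper. The paper argues by contradiction: it uses the concrete realization $s_i=l_i+l_i^*$ inside $\cO_\infty$ and the vacuum state $\psi$ (so that $\vp(TT^*)=0$), passes to an ultralimit to produce isometries $S,T$ with $\vp(f_m(|S+T^*|))\ge\ve$, identifies the kernel projection of $S+T^*$ as a WOT-limit of Ces\`aro means of $(-TS)^j$, and derives a contradiction; this is soft and gives no effective $\delta$. You instead Hermitize, use the unique trace on $\cC$ together with factoriality of $\pi_{\tau_\cC}(\cC)''\cong L(\IF_\infty)$ to factorize any trace $\psi$ on $\cC\otimes\IM_2(A)$ over the commuting subalgebras, recognize $\tilde X$ as a $Q$-valued semicircular element, and normalize the covariance by adjoining one more generator so that the scalar distribution of $\tilde Y$ is exactly semicircular; the spectral mass near $0$ is then bounded by the semicircle density, and an explicit $\delta(\ve)\asymp\ve^{8}$ falls out. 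What your route buys is quantitativeness and independence from the Cuntz-algebra realization (only the trace on $\cC$ is used); what the paper's route buys is brevity and avoidance of the operator-valued free probability machinery. Two points in your write-up deserve an explicit word: (a) the inequality $|\psi(f(\tilde X))-\psi(f(\tilde Y))|\le\mathrm{Lip}(f)\,\|\tilde X-\tilde Y\|$ is \emph{not} the (false) operator-norm Lipschitz estimate for functional calculus; it holds at the level of traces, e.g.\ via the Fack--Kosaki comparison $|\lambda_t(\tilde X)-\lambda_t(\tilde Y)|\le\|\tilde X-\tilde Y\|$ of eigenvalue functions, or via the $L^2$-boundedness of the divided-difference Schur multiplier; and (b) the passage from a trace value of a spectral projection computed in $(\cC\otimes A)^{**}$ to the one computed in $\pi_\psi(\,\cdot\,)''$ uses that $[0,\delta)$ is relatively open, so the projection is an increasing limit of continuous functional calculi and is preserved by the normal quotient map. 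Both are standard, and with them supplied your proof is complete.
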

\begin{proof}
Note that $T(\cC\otimes A)=\{\tau_{\cC}\otimes\tau_A : \tau_A\in T(A)\}$. 
Suppose that the conclusion were false. 
Then, there is $\ve>0$ such that for every $n$ there are a finite sequence 
$a_n$ in $A_n$ and $\tau_n\in T(A_n)$ that satisfy
$\|\sum_i a_{n,i}^*a_{n,i} -1\|<1/n$, $\|\sum_i a_{n,i}a_{n,i}^*-1\|<1/n$, 
and $(\tau_{\cC}\otimes\tau_{A_n})(1_{[0,1/n)}(|\sm(a_n)|))\geq\ve$. 
We define a continuous function $f$ 
by $f=1$ on $[0,1/n]$, $f=0$ on $[1/(n-1),\infty)$, 
and linear on $[1/n,1/(n-1)]$.
Let $\psi$ denote the vacuum state on $\cO_\infty$ that extends $\tau_{\cC}$ 
and put $\vp_n:=\psi\otimes\tau_{A_n}$ on $\cO_\infty\otimes A_n$. 
Then, $S_n := \sum_i l_i \otimes a_{n,i}$ 
and $T_n:=\sum_i l_i \otimes a_{n,i}^*$ satisfy
$\|S_n^*S_n-1\|<1/n$, $\|T_n^*T_n-1\|<1/n$, $\vp_n(T_nT_n^*)=0$, 
and $\vp_n( f_m(|S_n+T_n^*|) ) \geq \ve$
for all $n$ and $m$ with $n\geq m$. 
Thus by passing to an ultralimit, one obtains isometries $S$, $T$ and a state $\vp$ 
such that 
$\vp(TT^*)=0$ and $\vp(f_m(|S+T^*|))\geq\ve$ for all $m$. 
By the GNS construction, we may assume that $\vp$ is the vector 
state associated with a unit vector $\xi$. 
Then, $\lim_m \vp(f_m(|S+T^*|))=\| P\xi\|^2$, where $P$ 
is the orthogonal projection onto the kernel of $S+T^*$. 
Since $S$ and $T$ are isometries, $(S+T^*)\eta=0$ is equivalent to 
that $-TS\eta=\eta$. 
Hence $P$ is a WOT-limit point of $( k^{-1}\sum_{j=1}^k (-TS)^j )_k$. 
However, since $\|T^*\xi\|^2=\vp(TT^*)=0$, this implies $P\xi=0$, 
contradicting that $\|P\xi\|^2\geq\ve$. 
\end{proof}

In particular, if $\sum_i a_i^*a_i =1 = \sum_i a_ia_i^*$, then 
$\| 1_{\{0\}}(|\sm(a)|)\|_{T(\cC\otimes A)}=0$.
However, $\sm(a)=S+T^*$ is not invertible since 
$S$ and $T$ are proper isometries (in which case $-1$ 
is an approximate eigenvalue of $TS$). 
The author does not know whether 
$\| 1_{\{0\}}(|\sm(a)|)\|_{T(\cC\otimes A)}=0$ 
holds as soon as 
$\|\sum_i a_i^*a_i -1\|<1/2$ and $\|\sum_i a_ia_i^*-1\|<1/2$. 
\begin{thm}\label{thm:amenunif}
Let $A$ be a unital simple finite nuclear $\cZ$-stable $\mathrm{C}^*$-alge\-bra. 
Then for every finite subset 
$E\subset\cU(A)$ and $\ve>0$, 
there are a non-empty finite subset $F\subset\cU(A)$ 
and permutations $\{\rho_u : u\in E\}$ on $F$ such that 
$\|v u - \rho_u(v) \|_{T(A)} < \ve$ 
for every $u\in E$ and $v\in F$. 
In particular, $\cU(A)$ is amenable in the uniform $2$-norm topology. 
\end{thm}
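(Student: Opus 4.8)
The plan is to start from the Følner-type property of $\CI(A)$ furnished by Corollary~\ref{cor} (which applies because $A$ is nuclear and, being simple monotracial, has the QTS property) and convert column isometries $F\subset\CI(A)$ with $\|aa^*-1\|<\delta$ and $\|a^*a-1\|<\delta$ into genuine unitaries of $A$ while controlling the uniform $2$-norm. The bridge is the map $\sm(a)=\sum_i s_i\otimes a_i\in\cC\otimes A$ together with Lemma~\ref{lem:st}: when the defects $\|\sum_i a_i^*a_i-1\|$ and $\|\sum_i a_ia_i^*-1\|$ are smaller than $\delta=\delta(\ve)$, the spectral projection $1_{[0,\delta)}(|\sm(a)|)$ is uniformly small in $T(\cC\otimes A)$, so the polar part of $\sm(a)$ is within $O(\ve)$ of a unitary in the uniform $2$-norm. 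Concretely, write $\sm(a)=w|\sm(a)|$ and let $u_a$ be the unitary obtained by functional calculus from $\sm(a)$ on the part of the spectrum above $\delta$ and set it to $1$ (or any fixed unitary) on $[0,\delta)$; then $\|\sm(a)-u_a\|_{T(\cC\otimes A)}$ is small, and the equivariance $\sm(au)=\sm(a)(1\otimes u)$ for $u\in\cU(A)$ shows that left multiplication by $u$ on $F$ is intertwined with right multiplication by $1\otimes u$ on the corresponding unitaries, up to the same uniform-$2$-norm error.

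The second ingredient is $\cZ$-stability. Since $\cC$ embeds into $\cZ^\omega$ by Theorem~\ref{thm:fsemb}, and $A\cong A\otimes\cZ$, one gets an embedding $\cC\otimes A\hookrightarrow \cZ^\omega\otimes A = (A\otimes\cZ)^\omega \cong A^\omega$ (using exactness of $\cZ$ to commute the ultrapower past the tensor, as in \cite{kirchberg}), and this embedding can be taken to carry $1\otimes A\subset\cC\otimes A$ onto the diagonal copy of $A$ inside $A^\omega$, hence to be $\cU(A)$-equivariant for the right translation actions. Pushing the unitaries $u_a\in\cC\otimes A$ through this embedding and lifting to a representing sequence, one obtains for each index an honest finite set $F_n\subset\cU(A^{(n)})$ — but all the $A^{(n)}$ are just $A$ — so after choosing $n$ past the ultrafilter one has a finite $F\subset\cU(A)$ with permutations $\rho_u$ (the same permutations $\rho_u$ from Corollary~\ref{cor}, transported along) satisfying $\|vu-\rho_u(v)\|_{T(A)}<\ve$ for all $u\in E$, $v\in F$. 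Taking a weak-$*$ limit of the uniform probability measures on these Følner sets $F$ produces a left-invariant mean on the bounded uniformly continuous functions for the uniform $2$-norm topology, giving amenability of $\cU(A)$ in that topology.

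The main obstacle I expect is making the passage $\cC\otimes A\hookrightarrow A^\omega$ \emph{equivariant} and \emph{compatible with representing sequences} simultaneously: one must ensure the embedding sends $1\otimes u$ to (a representative of) the constant sequence $u$, so that the combinatorial data $\{\rho_u\}$ survives, and one must control that the uniform $2$-norm estimates, which live in $T(\cC\otimes A)=\{\tau_\cC\otimes\tau_A\}$, transfer to the single relevant trace on $A$ after restricting along the embedding — this is where simplicity and monotraciality (or at least the structure of $T(A)$) are used, since the trace on $A^\omega$ restricted back to $A$ must be the given one. A secondary technical point is the functional-calculus step defining $u_a$: one needs the discontinuity of $1_{[0,\delta)}$ to be harmless, which is exactly the content of Lemma~\ref{lem:st}, but one should be careful that the cut-off unitary $u_a$ depends continuously (in uniform $2$-norm) on $\sm(a)$ uniformly over $F$, so that the error bounds are genuinely uniform and not merely pointwise.
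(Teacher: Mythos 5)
Your overall architecture matches the paper's: start from Corollary~\ref{cor}, pass to $\sm(a)=\sum_i s_i\otimes a_i\in\cC\otimes A_0$, use Lemma~\ref{lem:st} to control the small spectral part, produce unitaries, push them through $\cC\otimes A_0\subset\cZ^\omega\otimes A_0\subset(\cZ\otimes A_0)^\omega$ with $E\subset\IC1\otimes A_0$ acting by constant sequences, and extract a good index via the ultrafilter. But there is a genuine gap at the step you dismiss as ``a secondary technical point'': the construction of the unitary $u_a$. The polar part $w(a)$ of $\sm(a)$ lives only in $(\cC\otimes A_0)^{**}$, and $\sm(a)$ is \emph{never} invertible (the paper notes $S+T^*$ cannot be invertible since $S,T$ are proper isometries), so there is no functional-calculus recipe that ``sets $w$ to $1$ on $[0,\delta)$'' and lands in the C$^*$-algebra. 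Moreover the naive candidate $w(a)1_{[\delta,\infty)}(|\sm(a)|)+1_{[0,\delta)}(|\sm(a)|)$ is only an isometry, not a unitary, because the left and right spectral projections $1_{[0,\delta)}(|\sm(a)|)$ and $1_{[0,\delta)}(|\sm(a)^*|)$ need not agree; and even repaired in the bidual it would not belong to $\cC\otimes A_0$, which is what you need in order to lift it to a sequence of unitaries of $A$.

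The missing ingredient is exactly where the hypotheses ``simple'' and ``finite'' enter: since $A_0$ is simple, finite and $\cZ$-stable, $\cC\otimes A_0$ has stable rank one by Theorem~6.7 in \cite{rordam:ijm}, hence $\sm(a)\in\overline{\GL(\cC\otimes A_0)}$, and the Pedersen--R{\o}rdam results \cite{pedersen,rordam:annals} then supply a genuine unitary $v(a)\in\cC\otimes A_0$ with $1_{[\delta,\infty)}(|\sm(a)^*|)v(a)=1_{[\delta,\infty)}(|\sm(a)^*|)w(a)$; combined with Lemma~\ref{lem:st} and a uniform continuity estimate for $x\mapsto g(|x^*|)w(x)$ this gives $\|v(a)u-v(\rho_u(a))\|_{T(\cC\otimes A_0)}<5\ve$. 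Your proposal never invokes stable rank one and instead locates the use of simplicity in the trace-transfer step, which is not where it is needed (the transfer is handled by the ultraproduct-trace argument, valid for arbitrary $T(A)$, not just the monotracial case). Without the stable rank one input the argument does not close; the paper itself flags this as the crux in the remark following the proof.
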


\begin{proof}
Let a finite subset $E\subset\cU(A)$ and $\ve>0$ be given. 
We may assume that $A=\cZ\otimes A_0$, $A_0\cong A$, 
and $E\subset \IC1\otimes A_0$. 
We embed $\cZ\otimes A_0$ into the norm ultrapower 
$(\cZ\otimes A_0)^\omega$.
We take an embedding of the free semi-circular $\mathrm{C}^*$-alge\-bra
$\cC$ into $\cZ^\omega$ and view (by exactness of $A_0$) 
\[
\cC \otimes A_0\subset \cZ^\omega \otimes A_0 \subset (\cZ\otimes A_0)^\omega.
\]

Take $\delta>0$ as in Lemma~\ref{lem:st} and 
take $F\subset\CI(A_0)$ and $\{\rho_u : u\in E\}$ 
as in Corollary~\ref{cor} for $E$ and $\kappa$ 
(instead of $\ve$ there), where $\kappa>0$ is 
a sufficiently small number which is specified later.
Hence $\|1_{[0,\delta)}(|s(a)^*|)\|_{T(\cC\otimes A_0)}<\ve$ 
for every $a\in F$. 
Further,  
$\sm(a)\in\cC\otimes A_0$ 
satisfies, for every $u\in E$ and $a\in F$, that 
\[
\|\sm(a)\|\le \|a\|_{\rRC}\le 3
\mbox{ and }
\| \sm(a)u - s(\rho_u(a)) \|\le\| au-\rho_u(a)\|_{\rRC}<\kappa.
\]
Consider the polar decomposition $\sm(a)=|\sm(a)^*|w(a)$ 
of $\sm(a)^*$. 
Note that $w(a) \in (\cC\otimes A_0)^{**}$ 
may not belong to $\cC\otimes A_0$, 
but they satisfy $w(au)=w(a)u$. 
Since $A_0$ is simple, finite, and $\cZ$-stable, $\cC\otimes A_0$ has 
stable rank one by Theorem~6.7 in \cite{rordam:ijm}. 
Hence, $\sm(a)\in \overline{\GL(\cC\otimes A_0)}$ 
and by \cite{pedersen, rordam:annals} 
there is a unitary element $v(a)$ in $\cC\otimes A_0$ such that 
$1_{[\delta,\infty)}(|\sm(a)^*|) v(a)= 1_{[\delta,\infty)}(|\sm(a)^*|) w(a)$. 
Let $g(t)=\min\{\delta^{-1} t,1\}$ be the continuous function on $[0,\infty)$. 
We may assume that $\kappa>0$ is small enough so that 
$\| g(|x^*|)w(x) - g(|y^*|)w(y) \| < \ve$ whenever $x=|x^*|w(x)$ and $y=|y^*|w(y)$ 
(polar decompositions) satisfy $\|x\|,\,\|y\|\le 3$ and $\| x- y \|<\kappa$. 
We write $x\approx_{\gamma} y$ if $\|x-y\|_{T(\cC\otimes A_0)}<\gamma$. 
Since $\sm(au)=\sm(a)u=|\sm(a)^*|w(a)u$, 
$\sm(\rho_u(a))=|\sm(\rho_u(a))^*|w(\sm(\rho_u(a)))$, and 
$\|\sm(au) - \sm(\rho_u(a))\|<\kappa$, 
one has
\begin{align*}
v(a)u \approx_{2\ve} g(|\sm(a)^*|)w(a)u
 \approx_{\ve} g(|\sm(\rho_u(a))^*|)w(\rho_u(a))
 \approx_{2\ve}  v(\rho_u(a)),
\end{align*}
or equivalently
\[
\| v(a)u - v(\rho_u(a)) \|_{T(\cC\otimes A_0)} < 5\ve 
\]
for every $u\in E$ and $a\in F$.

Recall that $\cC\otimes A_0 \subset (\cZ\otimes A_0)^\omega$ 
and write $v(a) = (v_n(a))_{n\to\omega} \in (\cZ\otimes A_0)^\omega$, 
where $v_n(a)\in\cU(\cZ\otimes A_0)=\cU(A)$ for every $a\in F$ and $n\in\IN$.  
For every $u\in E$ and $a\in F$, one has 
\[
\{ n\in\IN : \|v_n(a)u - v_n(\rho_u(a)) \|_{T(A)} < 5\ve \} \in \omega,
\]
for otherwise 
$\{ n\in\IN : \|v_n(a)u - v_n(\rho_u(a)) \|_{\tau_n} \geq 5\ve\}\in\omega$ 
for some $(\tau_n)_n\in T(A)^{\IN}$, which implies 
$\| v(a)u - v(\rho_u(a)) \|_{\tau_\omega}\geq 5\ve$ for  
the ultraproduct tracial state $\tau_\omega$ of $(\tau_n)_n$.
It follows that there is $n\in\IN$ that satisfies 
$\|v_n(a)u - v_n(\rho_u(a)) \|_{T(A)} < 5\ve$ 
simultaneously for all $u\in E$ and $a\in F$.
\end{proof}

The simplicity assumption in lieu of the QTS property 
is used in the above proof only to have stable rank one. 
This begs the question: is it true that $\sm(a) \in \overline{\GL(\cC\otimes A)}$ 
for every finite sequence $a$ in a $\mathrm{C}^*$-alge\-bra with the QTS property?  
Incidentally, it seems that one can push the above proof and replace 
the simplicity assumption with the QTS property, by manipulating 
on a large matrix algebra $\IM_n(\cC\otimes A_0)$ 
and by suitably embedding $C_0((0,1],\IM_n(\cC\otimes A_0))$ 
into $\cZ\otimes \cC\otimes A_0$, without much affecting 
the uniform $2$-norm.


\begin{thebibliography}{DHR}
%
\bibitem[AST]{ast} V. Alekseev, M. Schmidt, and A. Thom; 
Amenability for unitary groups of $\mbox{C}^*$-alge\-bras. 
\emph{Preprint.} arXiv:2305.13181v1 
%
\bibitem[BC1]{bc1} S. Belinschi and M. Capitaine; 
Strong convergence of tensor products of independent G.U.E.\ matrices. 
\emph{Preprint.} arXiv:2205.07695v1
%
\bibitem[BC2]{bc2} C. Bordenave and B. Collins;
Norm of matrix-valued polynomials in random unitaries 
and permutations.
\emph{Preprint.} arXiv:2304.05714v1
%
\bibitem[CGP]{cgp} B. Collins, A. Guionnet, and F. Parraud;
On the operator norm of non-commutative polynomials 
in deterministic matrices and iid GUE matrices.
\emph{Camb. J. Math.} \textbf{10} (2022), 195--260.
%
\bibitem[Co1]{connes} A. Connes; 
Classification of injective factors. 
Cases $\mathrm{II}_1$, $\mathrm{II}_\infty$, $\mathrm{III}_\lambda$, $\lambda\neq1$. 
\emph{Ann. of Math. (2)} \textbf{104} (1976), 73--115.
%
\bibitem[Co2]{connes:jfa} A. Connes;
On the cohomology of operator algebras.
J. Funct. Anal. \textbf{28} (1978), 248--253.
%
\bibitem[CE]{ce} M. D. Choi and E. G. Effros;
Nuclear $\mbox{C}^*$-alge\-bras and injectivity: the general case. 
\emph{Indiana Univ. Math. J.} \textbf{26} (1977), 443--446.
%
\bibitem[DHR]{dhr} K. Dykema, U. Haagerup, and M. R{\o}rdam;
The stable rank of some free product $\mbox{C}^*$-alge\-bras.
\emph{Duke Math. J.} \textbf{90} (1997), 95--121.
%
\bibitem[El]{elliott} G. A. Elliott;
On approximately finite-dimensional von Neumann algebras. II. 
\emph{Canad. Math. Bull.} \textbf{21} (1978), 415--418.
%
\bibitem[HT]{ht} U. Haagerup and S. Thorbj{\o}rnsen;
A new application of random matrices: $\mbox{Ext}(\mbox{C}^*_{\mathrm{red}}(F_2))$ 
is not a group.
\emph{Ann. of Math. (2)} \textbf{162} (2005), 711--775.
%
\bibitem[dlH]{delaharpe} P. de la Harpe;
Moyennabilit\'e du groupe unitaire et propri\'et\'e  
P  de Schwartz des alg\`ebres de von Neumann.
\emph{Lecture Notes in Math.}, \textbf{725},
Springer, Berlin, 1979, pp. 220--227,
%
\bibitem[JS]{js} K. Juschenko and F. M. Schneider;
Skew-amenability of topological groups.
\emph{Comment. Math. Helv.} \textbf{96} (2021), 805--851.
%
\bibitem[Ki]{kirchberg} E. Kirchberg;
Exact $\mbox{C}^*$-alge\-bras, tensor products, and the classification 
of purely infinite algebras.
\emph{Proceedings of the International Congress of Mathematicians}, Vol. 1, 2 (Z\"urich, 1994), 943--954. Birkh\"auser Verlag, Basel, 1995. 
%
\bibitem[KS]{ks} A. Kishimoto and S. Sakai; 
Homogeneity of the pure state space for the separable nuclear $\mathrm{C}^*$-alge\-bras.
\emph{Unpublished.} arXiv:math/0104258v1
%
\bibitem[Oz]{dix} N. Ozawa; 
Dixmier approximation and symmetric amenability for $\mathrm{C}^*$-alge\-bras.
\emph{J. Math. Sci. Univ. Tokyo}, \textbf{20} (2013), 349--374.
%
\bibitem[Pa]{paterson} A. L. T. Paterson; 
Nuclear $\mbox{C}^*$-alge\-bras have amenable unitary groups.
\emph{Proc. Amer. Math. Soc.} \textbf{114} (1992), 719--721.
%
\bibitem[Ped]{pedersen} G. K. Pedersen;
Unitary extensions and polar decompositions in a $\mathrm{C}^*$-alge\-bra.
\emph{J. Operator Theory} \textbf{17} (1987), 357--364.
%
\bibitem[Pes]{pestov} V. Pestov; 
An amenability-like property of finite energy path and loop groups.
\emph{C. R. Math. Acad. Sci. Paris} \textbf{358} (2020), 1139–-1155.
%
\bibitem[PS]{ps} V. G. Pestov and F. M. Schneider;
On invariant means and pre-syndetic subgroups.
\emph{Preprint.} arXiv:2301.07828v2
%
\bibitem[Pi]{pisier} G. Pisier;
Random matrices and subexponential operator spaces.
\emph{Israel J. Math.} \textbf{203} (2014), 223--273.
%
\bibitem[R\o1]{rordam:annals} M. R{\o}rdam; 
Advances in the theory of unitary rank and regular approximation. 
\emph{Ann. of Math. (2)} \textbf{128} (1988), 153--172.
%
\bibitem[R\o2]{rordam:acta} M. R{\o}rdam; 
A simple $\mbox{C}^*$-alge\-bra with a finite and an infinite projection.
\emph{Acta Math.} \textbf{191} (2003), 109--142.
%
\bibitem[R\o3]{rordam:ijm} M. R{\o}rdam; 
The stable and the real rank of $\cZ$-absorbing $\mbox{C}^*$-alge\-bras.
Internat. J. Math. \textbf{15} (2004), 1065--1084.
%
\bibitem[Sa]{sato} Y. Sato; 
$2$-positive almost order zero maps and decomposition rank. 
\emph{J. Operator Theory} \textbf{85} (2021), 505--526.
%
\bibitem[ST]{st} F. M. Schneider and A. Thom; 
On F{\o}lner sets in topological groups. 
\emph{Compos. Math.} \textbf{154} (2018), 1333--1361.
%
\bibitem[VDN]{vdn} D. V. Voiculescu, K. J. Dykema, and A. Nica;
\emph{Free random variables.}
CRM Monogr. Ser., 1
American Mathematical Society, Providence, RI, 1992. vi+70 pp.
%
\end{thebibliography}
\end{document}